\newtheorem{thm}{Theorem}[section]
\newtheorem{prop}[thm]{Proposition}
\theoremstyle{remark}
\newtheorem{rem}[thm]{Remark}
\theoremstyle{definition}
\newtheoremstyle{Claim}{}{}{\itshape}{}{\itshape\bfseries}{:}{ }{#1}
\theoremstyle{Claim}
\newcommand{\R}{\mathbb{R}}
\newcommand{\mI}{\mathcal I}
\newcommand{\mG}{\mathcal G}
\newcommand{\half}{\frac 1 2}
\newcommand{\pd}{\partial}
\theoremstyle{plain}
\def\sideremark#1{\ifvmode\leavevmode\fi\vadjust{
		\vbox to0pt{\hbox to 0pt{\hskip\hsize\hskip1em
				\vbox{\hsize3cm\tiny\raggedright\pretolerance10000
					\noindent #1\hfill}\hss}\vbox to8pt{\vfil}\vss}}}
	\title{Li-Yau  inequality and related
		properties on metric star graphs}
	\author{Fabio Camilli\thanks{Dip. di Ingegneria e Geologia, Universit\`a degli Studi ``G. d'Annunzio" Chieti-Pescara, viale Pindaro 42, 65127 Pescara (Italy) ({\tt fabio.camilli@unich.it}).}}
\begin{document}
\maketitle	
	
	\begin{abstract} 
		We prove a Li-Yau gradient estimate for positive solutions to the heat equation defined on a metric star graph $\mG$ given by the heat kernel formula. As consequence, we derive a Harnack estimate and a Liouville property for bounded  harmonic functions. The argument exploits an explicit representation formula for the heat kernel on $\mG$.
	\end{abstract}

	\noindent\textit{Mathematics Subject Classification:} {35R02, 35A23, 58J35.}

	\noindent\textit{Keywords:} {metric graph, heat equation,  Li-Yau inequality, Harnack inequality.}

\section{Introduction}
In \cite{ly},   Li and   Yau proved the classic gradient estimates (the Li-Yau inequality)
  \begin{equation}\label{LY}
	\Delta(\ln(u))\ge -\frac{n}{2t}
\end{equation} 
for  a positive solution $u$  of the heat equation on a  $n$-dimensional compact Riemannian manifold with non-negative Ricci curvature where $\Delta$ is the Laplace-Beltrami operator. As applications of \eqref{LY}, the Harnack inequality and the  Liouville property for harmonic functions can be derived.  Many efforts have been made to establish analogous results in different settings and for more general equations (\cite{bl,cm,ham,sz}).  
The previous theory has been expanded  in \cite{bhllmy} (see also \cite{glly,q}) to encompass the case of a connected, finite graph with  the Laplacian   substituted with a finite difference operator known as the graph Laplacian. \par
An intermediary between a   manifold and a  graph is represented by a metric graph, or network,   comprising a   set of vertices interconnected by continuous, non-self-intersecting edges. In this setting, the heat equation  is defined on each edge, with appropriate transition conditions of Kirchhoff type prescribed at the vertices (see \cite{kps,mu}). 
Although there are different proofs of \eqref{LY}, the classical argument is based on deriving   twice the  equation satisfied by $\ln(u)$  and using the maximum principle for the resulting nonlinear pde. It does not seem to be possible to use this approach for the case of networks.  Indeed, by deriving the heat equation, the transition conditions at the vertices are lost and the resulting problem no longer satisfies a maximum principle.\par
For the particular case of metric star graphs, i.e. a vertex  connected with   $N$ half-lines, we obtain an analogous of the inequality \eqref{LY} for   solutions of the heat equation  given by the heat kernel representation formula   in \cite{o}.   The resulting inequality contains an additional term,   vanishing at the vertex, which takes into account the non-symmetry of the problem.  We also discuss two classical consequences of the Li-Yau inequality, respectively a Harnack inequality and a Liouville property for bounded harmonic functions on   metric star graphs.

\subsection{Notations:} A metric star graph $\mathcal G$  is composed of a vertex $O$ to which are connected $N$ edges $e_1,\dots,e_N$,  each edge being a half-line. 
In the following, we  identify each edge $e_i$, $i=1,\dots,N$, with the interval $I_i=[0,\infty)$, with $0$ corresponding to the vertex $O$. 
A schematic representation of a star graph is given in Figure \ref{fig:star_graph}. 
\begin{figure}[htbp!]\label{fig:star_graph}
	\centering
	\begin{tikzpicture}[scale=0.040]
		\node at (0,0) {$\bullet$};
	 	\node at (8,0) {$O$};
	 	\node at (36,18) {$I_i$};
		\draw[rotate=40] (0,0) -- (40,0) node[anchor=west]  {};
		\draw[rotate=110] (0,0) -- (40,0) node[anchor=west] {};
		\draw[rotate=180] (0,0) -- (40,0)node[anchor=north] {};
		\draw[rotate=240] (0,0) -- (40,0)node[anchor=north] {};
		\draw[rotate=320] (0,0) -- (40,0)node[anchor=north] {};
	\end{tikzpicture}    
	\caption{A star graph with $5$ edges.}
\end{figure}
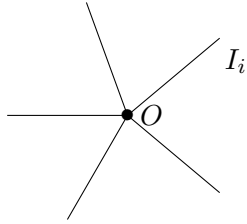
We consider the distance function on $\mG$   given by
\begin{equation}\label{eq:dist}
	\mathrm{dist}(x,y)=\begin{cases}
		|x-y|& x,y\in I_i,\, i=1,\dots,N\\
		x+y& x\in I_i,\, y\in I_j,\,i\neq j ,
	\end{cases}
\end{equation}
and we set $B_R=\{x\in\mathcal G: \, \mathrm{dist}(x,0)\le R\}$.
A function $u$ on $\mathcal G$ is a collection of functions  $u_j:I_j\to\mathbb \R$, $j=1,\dots,N$ and we set $\int_{\mathcal G}u dx =\sum_{j=1}^N\int_0^\infty u_j(x)dx$.\\
We consider the heat equation on $\mG$
\begin{equation}\label{eq:heat}
	\left\{
	\begin{array}{ll}
	\pd_t u_i-\pd_{xx}u_i=0\qquad&(x,t)\in I_i\times (0,\infty),\, i\in \{1,\dots,N\},\\[6pt]
	 u_i(0,t)=u_j(0,t)& t\in(0,\infty),\,i, j\in \{1,\dots,N\},\\[6pt]
	 \sum_{j=1}^N\alpha_j \pd_x u_j(0,t)=0& t\in(0,\infty),\\[6pt]
	 u_i(x,0)=\phi_i(x)&x\in I_i,\, i\in \{1,\dots,N\},
	\end{array}
	\right.
\end{equation} 
where $\alpha_i>0$, $\sum_{i=1}^N\alpha_i=1$ and $\phi:{\mathcal G}\to \R$  with $\phi\ge 0$, $\int_{\mathcal G}\phi(x) dx<+\infty$.
\begin{rem}
In \eqref{eq:heat}, the heat equation is defined inside each edge $I_i$, while at vertex $O$ we require continuity of the solution and the classical Kirchhoff condition. Well posedness of \eqref{eq:heat} is discussed in \cite{kps,mu}, while its probabilistic interpretation in \cite{fs}.
\end{rem}
Let 
\begin{equation}\label{g}
	g(z,t)=\frac{1}{(4\pi t)^\half}e^{-\frac{|z|^2}{4t}}\qquad z\in\R,\, t\in (0,\infty),
\end{equation}
be the standard heat kernel. Then, the formula
\begin{equation}\label{eq:sol_heat}
	P_t\phi(x)=\int_{\mathcal G}\Gamma_i(x,y,t)\phi(y)dy=\sum_{j=1}^N\int_0^{\infty}\Gamma_i(x,y,t)\phi_j(y)dy\qquad (x,t)\in I_i\times\R^+,
\end{equation}
where
\begin{equation}\label{eq:kernel}
	\Gamma_i(x,y,t)=
	\begin{cases}
		2\alpha_j g(x+y,t)&x\in I_i, y\in I_j,\, j\neq i,\\[4pt]
		 g(x-y,t)+(2\alpha_i-1)g(x+y,t) &x,\,y\in I_i,
	\end{cases}
\end{equation}
gives a solution of  \eqref{eq:heat}  (see \cite{fs,o}).
\begin{rem}
Formulas for the heat kernel     has been computed in \cite{cc,kps}  for  the case of  finite metric graph and in \cite{c}  for an infinite homogeneous tree   and   a countable
metric graph.	
\end{rem}
\section{The Li-Yau inequality}
In this section, we prove a Li-Yau inequality (see \cite{ly})  for the heat equation defined on  the metric star graph $\mG$. We will consider solutions of the heat equation given by
\eqref{eq:sol_heat} and we will exploit properties of the heat kernel $g(z,t)$ in \eqref{g}.
\begin{thm}\label{th:liyau}
Let $u=P_t\phi$. Then
	\begin{equation}\label{eq:liyau_est}
	\pd_{xx}(\ln(u))\ge -\frac{1}{2t}-(1-2\alpha_i)\mI_i(x,t), \qquad x\in I_i, \,t>0,
	\end{equation}
where
\begin{equation}\label{eq:I}
	\mI_i(x,t)=\frac{1}{4t^2P_t\phi(x)}\int_0^\infty (|x+y|-|x-y|)^2\frac{g(x-y,t)g(x+y,t)}{g(x-y,t)+(2\alpha_i-1)g(x+y,t)}\phi_i(y)dy.
\end{equation}
\end{thm}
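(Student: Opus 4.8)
The plan is to sidestep the classical argument — differentiating the equation twice and invoking the maximum principle, which is incompatible with the Kirchhoff conditions as explained above — and instead to read \eqref{eq:liyau_est} directly off the explicit kernel \eqref{eq:kernel} via a convexity-under-mixtures property of $x\mapsto\partial_{xx}\ln(\,\cdot\,)$. The lemma I would isolate first is elementary: if $(w_\lambda)_\lambda$ is a family of positive $C^2$ functions of one variable and $\nu\ge 0$ a measure for which $u(x):=\int w_\lambda(x)\,d\nu(\lambda)$ is finite, strictly positive, and twice differentiable under the integral sign, then
\[
\partial_{xx}\ln u(x)\;\ge\;\frac{1}{u(x)}\int\bigl(\partial_{xx}\ln w_\lambda(x)\bigr)\,w_\lambda(x)\,d\nu(\lambda).
\]
Indeed, with the probability density $p_\lambda(x)=w_\lambda(x)/u(x)$ one has $\partial_x\ln u=\E_p[\partial_x\ln w_\lambda]$, and differentiating once more (the $x$-derivative of $p_\lambda$ contributing a covariance term, here a variance) yields $\partial_{xx}\ln u=\E_p[\partial_{xx}\ln w_\lambda]+\mathrm{Var}_p(\partial_x\ln w_\lambda)$; discarding the nonnegative variance gives the inequality. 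For the Gaussian atoms $w_y=g(\,\cdot-y,t)$, which satisfy $\partial_{xx}\ln g(\,\cdot-y,t)\equiv-\tfrac1{2t}$, this reduces to the familiar one-line proof of the Euclidean Li--Yau inequality.

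The one genuinely conceptual step is then to apply this to $u=P_t\phi$ with the \emph{right} atoms. By \eqref{eq:sol_heat}--\eqref{eq:kernel}, for $x\in I_i$ one has
\[
u(x,t)=\int_0^\infty \Gamma_i(x,y,t)\,\phi_i(y)\,dy\;+\;\sum_{j\ne i}\int_0^\infty g(x+y,t)\,\bigl(2\alpha_j\phi_j(y)\bigr)\,dy ,
\]
so I would take as atoms the already-symmetrised edge kernels $w_y=\Gamma_i(\,\cdot\,,y,t)$ for $y$ on the edge $I_i$ containing $x$ (with weight $\phi_i(y)\,dy\ge0$), and the plain reflected Gaussians $w_y=g(\,\cdot+y,t)$ for $y$ on every other edge (with weight $2\alpha_j\phi_j(y)\,dy\ge0$). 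These atoms are strictly positive: for $y\in I_j$, $j\ne i$, this is clear, and for $y\in I_i$ one uses $|x-y|\le x+y$ (as $x,y\ge0$), hence $g(x-y,t)\ge g(x+y,t)$ and so $\Gamma_i(x,y,t)\ge 2\alpha_i\,g(x+y,t)>0$. In particular $u>0$ on $\mathcal G\times(0,\infty)$ whenever $\phi\not\equiv0$, the only non-vacuous case, and differentiation under the integral sign is legitimate since $\int_{\mathcal G}\phi\,dx<\infty$ and $g,\partial_x g,\partial_{xx}g$ have Gaussian decay, locally uniformly for $t$ away from $0$.

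It then remains to compute the two pointwise log-Laplacians and assemble. Since $g(x+y,t)$ is Gaussian in $x$, $\partial_{xx}\ln g(\,\cdot+y,t)\equiv-\tfrac1{2t}$. For the edge atom, differentiating $\Gamma_i(x,y,t)=g(x-y,t)+(2\alpha_i-1)g(x+y,t)$ twice through $\partial_x g(x\mp y,t)=-\tfrac{x\mp y}{2t}\,g(x\mp y,t)$ and expanding $\Gamma_i\,\partial_{xx}\Gamma_i-(\partial_x\Gamma_i)^2$, the quadratic terms collapse into a single square, leaving
\[
\Gamma_i\,\partial_{xx}\Gamma_i-(\partial_x\Gamma_i)^2=-\frac{\Gamma_i^2}{2t}+\frac{2\alpha_i-1}{4t^2}\,\bigl((x+y)-(x-y)\bigr)^2\,g(x-y,t)\,g(x+y,t),
\]
that is, $\partial_{xx}\ln\Gamma_i(\,\cdot\,,y,t)=-\tfrac1{2t}+(2\alpha_i-1)\,\frac{\bigl((x+y)-(x-y)\bigr)^2 g(x-y,t)g(x+y,t)}{4t^2\,\Gamma_i(x,y,t)^2}$, with $(x+y)-(x-y)=2y$ the quantity entering \eqref{eq:I}. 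Feeding both log-Laplacians into the mixture inequality, the $-\tfrac1{2t}$ contributions recombine — using \eqref{eq:sol_heat} once more — to $-\tfrac1{2t}\cdot u^{-1}\cdot u=-\tfrac1{2t}$, while the surviving same-edge term is exactly $\tfrac{2\alpha_i-1}{4t^2 u(x,t)}\int_0^\infty\bigl((x+y)-(x-y)\bigr)^2\,\tfrac{g(x-y,t)g(x+y,t)}{\Gamma_i(x,y,t)}\,\phi_i(y)\,dy=-(1-2\alpha_i)\,\mI_i(x,t)$, which is \eqref{eq:liyau_est}. The crux is the choice of atoms: one must keep the reflected term $(2\alpha_i-1)g(x+y,t)$ \emph{glued} to $g(x-y,t)$ on the incident edge, so that the vertex effect is absorbed into the already-known kernel $\Gamma_i$ and only the Gaussian atoms $g(\,\cdot+y,t)$ survive on the other edges; the variance lemma and the two derivative computations are then routine. (A direct computation of $u\,\partial_{xx}u-(\partial_x u)^2$ as a double integral against $\Gamma_i$ would also work, but the mixture route makes the appearance of $\Gamma_i(x,y,t)$ in the denominator of $\mI_i$ transparent.)
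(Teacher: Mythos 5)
Your route is genuinely different from the paper's: the paper writes $\pd_{xx}\ln u=\pd_t v-|\pd_x v|^2$ (using the heat equation edgewise) and bounds $|\pd_x v|^2$ by Cauchy--Schwarz against the kernel, whereas you compute $\pd_{xx}\ln u$ directly as a mixture mean plus a discarded variance. Your lemma is correct, the choice of atoms ($\Gamma_i(\cdot,y,t)$ on the incident edge, $g(\cdot+y,t)$ with weight $2\alpha_j\phi_j$ on the others) is the right one, the positivity $\Gamma_i\ge 2\alpha_i g(x+y,t)>0$ is verified correctly, and the identity $\Gamma_i\,\pd_{xx}\Gamma_i-(\pd_x\Gamma_i)^2=-\tfrac{\Gamma_i^2}{2t}+\tfrac{2\alpha_i-1}{4t^2}\bigl((x+y)-(x-y)\bigr)^2 g(x-y,t)g(x+y,t)$ is exact. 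The gap is in the very last identification: $\bigl((x+y)-(x-y)\bigr)^2=4y^2$, while the quantity entering \eqref{eq:I} is $(|x+y|-|x-y|)^2=4\min(x,y)^2$; these agree only when $y\le x$. So what you actually prove is \eqref{eq:liyau_est} with $\mI_i$ replaced by $\widetilde{\mI}_i(x,t)=\tfrac{1}{t^2P_t\phi(x)}\int_0^\infty y^2\,\tfrac{g(x-y,t)g(x+y,t)}{\Gamma_i(x,y,t)}\phi_i(y)\,dy\ \ge\ \mI_i(x,t)$.

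Since the correction term enters with the factor $(2\alpha_i-1)$, your inequality implies the stated one when $\alpha_i\ge\tfrac12$, but for $\alpha_i<\tfrac12$ --- the generic case on a star graph with $N\ge 3$, e.g.\ $\alpha_j=\tfrac13$ in the paper's own examples --- it is strictly weaker wherever $\phi_i$ charges $\{y>x\}$, and does not yield \eqref{eq:liyau_est}; note that the $\min(x,y)^2$ weight is exactly what produces the bound $\mI_i\le \tfrac{x^2}{t^2}(\cdots)$ in \eqref{eq:I_bis}--\eqref{eq:stima_I}, on which the Harnack and Liouville applications rely, so the difference is not cosmetic. Moreover the discrepancy cannot be repaired by a further comparison argument: your (exact, single-atom) computation reproduces the paper's Example 2 ($\alpha_3=\tfrac13$, $\phi_3=\delta_1$), where for $0<x<1$ the true value $-\tfrac1{2t}-\tfrac{1}{3t^2}\,\tfrac{g(x-1,t)g(x+1,t)}{u^2}$ lies strictly below $-\tfrac1{2t}-\tfrac13\mI_3(x,t)$, i.e.\ the $y^2$-weighted term is the sharp one there. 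The mismatch traces back to the paper's step \eqref{eq:ly3}, where the signed factor $x-y$ is replaced by $|x-y|$ inside a square whose companion term carries the sign of $2\alpha_i-1$. If you keep your (cleaner) route, you should state the conclusion with the $y^2$ weight and explicitly flag that it differs from \eqref{eq:I} on $\{y>x\}$ when $\alpha_i<\tfrac12$, rather than silently identifying $2y$ with $|x+y|-|x-y|$.
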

\begin{proof} 
Set  $v(x,t)=\ln (P_t\phi(x))$, where $x\in  I_i$ and $P_t$ is defined as in \eqref{eq:sol_heat}.  Since
 \[\pd_t g(z,t)= \frac{|z|^2}{4t^2}g(z,t) -\frac{1}{2t}g(z,t), \]
we have 
\begin{equation}\label{eq:ly2}
	\begin{split}
		\pd_t v(x,t)&= \frac{ \pd_t P_t\phi(x)}{P_t \phi(x) }  \\
		&=\frac{ 1}{P_t \phi(x) } \Big[\sum_{j\neq i }2\alpha_j \int_0^\infty\big(\frac{|x+y|^2}{4t^2}-\frac{1}{2t}\big)g(x+y,t)\phi_j(y)dy\\
		&+\int_0^\infty \big(\frac{|x-y|^2}{4t^2}g(x-y,t)+(2\alpha_i-1)\frac{|x+y|^2}{4t^2}g(x+y,t)\big)\phi_i(y)dy\\
		&-\int_0^\infty\frac{1}{2t} \big(g(x-y,t)+(2\alpha_i-1)g(x+y,t)\big)\phi_i(y)dy\Big]\\
		&=\frac{ 1}{P_t \phi(x) } \Big[\sum_{j\neq i }2\alpha_j \int_0^\infty\frac{|x+y|^2}{4t^2}g(x+y,t)\phi_j(y)dy\\
		&+\int_0^\infty \big(\frac{|x-y|^2}{4t^2}g(x-y,t)+(2\alpha_i-1)\frac{|x+y|^2}{4t^2}g(x+y,t)\big)\phi_i(y)dy\Big]-\frac{1}{2t}.
	\end{split}
\end{equation}
Moreover, since
\[\pd_x v(x,t)=  \frac{ \pd_x P_t\phi(x)}{P_t \phi(x) }=\frac{1}{P_t \phi(x)}\int_{\mG}\pd_x\Gamma_i(x,y,t)\phi(y)dy,\]
by H\"older's inequality we get
\begin{equation}\label{eq:ly1}
	\begin{split}
	|\pd_x v(x,t)|&\le \frac{1}{P_t \phi(x)} \int_\mG\frac{|\pd_x\Gamma_i(x,y,t)|}{\sqrt{\Gamma_i(x,y,t)}}\sqrt{\phi(y)}\sqrt{\Gamma_i(x,y,t)}\sqrt{\phi(y)}dy\\
	&\le  \frac{1}{P_t \phi(x)}\Big(\int_\mG\frac{|\pd_x\Gamma_i(x,y,t)|^2}{ \Gamma_i(x,y,t)}\phi(y)dy\Big)^\half\Big(\int_\mG \Gamma_i(x,y,t) \phi(y)dy\Big)^\half\\
	&=\frac{1}{\sqrt{P_t \phi(x)}} \Big(\sum_{j=1}^N\int_0^{\infty}\frac{|\pd_x\Gamma_i(x,y,t)|^2}{ \Gamma_i(x,y,t)}\phi_j(y)dy\Big)^\half.
\end{split}
\end{equation}
Since $|\pd_z g(x,t)|=|z|g(z,t)/2t$, we have
\begin{equation}\label{eq:ly3}
	\frac{|\pd_x\Gamma_i(x,y,t)|^2}{ \Gamma_i(x,y,t)}=
	\begin{cases}
		2\alpha_j \dfrac{|x+y|^2}{4t^2}g(x+y,t)\quad&y\in I_j,\, j\neq i,\\[8pt]
		\dfrac{1}{4t^2}\frac{\big(|x-y|g(x-y,t)+(2\alpha_i-1)|x+y|g(x+y,t)\big)^2}{g(x-y,t)+(2\alpha_i-1)g(x+y,t)}&y\in I_i,
	\end{cases}
\end{equation}
and we observe that 
\begin{equation}\label{eq:ly4}
\begin{split}
&\frac{1}{4t^2}\frac{\big(|x-y| g(x-y,t)+(2\alpha_i-1)|x+y|g(x+y,t)\big)^2}{g(x-y,t)+(2\alpha_i-1)g(x+y,t)}\\
&=\frac{|x-y|^2}{4t^2}g(x-y,t)+(2 \alpha_i-1)\frac{|x+y|^2}{4t^2}	g(x+y,t) \\
&- \frac{(2\alpha_i-1)}{4t^2}\big[(|x+y|-|x-y|)^2\frac{g(x-y,t)g(x+y,t)}{g(x-y,t)+(2\alpha_i-1)g(x+y,t)}
\big].
\end{split}
\end{equation}
Exploiting \eqref{eq:ly2}, \eqref{eq:ly3} and \eqref{eq:ly4} in \eqref{eq:ly1} and recalling \eqref{eq:I}, we get
\begin{equation}\label{eq:ly5}
	\begin{split}
	|\pd_x v(x,t)|^2&\le \frac{1}{P_t \phi(x)}  \sum_{j=1}^N\int_0^{\infty}\frac{|\pd_x\Gamma_i(x,y,t)|^2}{ \Gamma_i(x,y,t)}\phi_j(y)dy \\
	&  =\frac{1}{P_t \phi(x)}\Big[\sum_{j\neq i}2\alpha_j\int_0^\infty \dfrac{|x+y|^2}{4t^2}g(x+y,t)\phi_j(y)dy\\
&	+ \int_0^\infty\Big(\frac{|x-y|^2}{4t^2}g(x-y,t)+(2 \alpha_i-1)\frac{|x+y|^2}{4t^2}	g(x+y,t)\Big)\phi_i(y)dy\Big]\\
&+(1-2\alpha_i)\mI_i(x,t)=\frac{\pd_t P_t\phi(x)}{P_t\phi(x)}+\frac{1}{2t}+(1-2\alpha_i)\mI(x,t)\\
&=\pd_t (v(x,t))+\frac{1}{2t}+(1-2\alpha_i)\mI_i(x,t).
\end{split}
\end{equation}
Hence 
\[
\pd_{xx}(\ln(u(x,t))=\pd_t (v(x,t))-|\pd_x v(x,t)|^2\ge -\frac{1}{2t}-(1-2\alpha_i)\mI_i(x,t)
\]
and therefore \eqref{eq:liyau_est}.
\end{proof}

\begin{rem}
Since  $g(x-y,t)-g(x+y,t)+2\alpha_i g(x+y,t)>0$ for $x,y\in\R^+$, then   $\mI_i(x,t)>0$ for $x\in I_i$, $t>0$. 
Computing $(|x+y|-|x-y|)^2$ for $x,y\ge 0$, we get
\begin{equation}\label{eq:I_bis}
	\begin{split}
	\mI_i(x,t)&=\frac{1}{t^2P_t\phi(x)}\big(\int_0^x y^2 h(x,y,t) \phi_i(y)dy+\int_x^\infty x^2 h(x,y,t) \phi_i(y)dy\big)\\
	&\le \frac{x^2}{t^2P_t\phi(x)} \int_0^\infty  h(x,y,t) \phi_i(y)dy
\end{split}
	\end{equation}
where
\[h(x,y,t)=\frac{g(x-y,t)g(x+y,t)}{g(x-y,t)+(2\alpha_i-1)g(x+y,t)}.\]
Since $g(x-y,t)-g(x+y,t)> 0$, we have $h(x,y,t)\le g(x-y,t)/2\alpha_i$ and therefore, by \eqref{eq:I_bis}, we get
\begin{equation}\label{eq:stima_I}
\mI_i(x,t)\le \frac{x^2}{t^2}\frac{\int_0^\infty  g(x-y,t) \phi_i(y)dy}{2\alpha_i P_t\phi(x)}
\end{equation}
and 
\begin{equation}\label{eq:stima_I_bis}
\mI_i(x,t)	\le \frac{x^2}{t^2}\frac{ (4\pi t)^{-\half} \|\phi_i\|_{L^1(\R^+)}}{2\alpha_i P_t\phi(x)}.
\end{equation}

\end{rem}

\begin{rem}

In the Euclidean setting, the Li-Yau estimate is known to be optimal because the heat kernel satisfies \eqref{LY} with equality. Interestingly, the real line can be regarded as a special case of a metric graph \(\mG = I_1 \cup I_2\) with \(\alpha_1 = \alpha_2 = 1/2\). In this context, we have \(\Gamma_i(x, y, t) = g(x - y, t)\) and \(\mI_i(x, t) \equiv 0\) for \(i = 1, 2\). Consequently, the solution of the heat equation given by \eqref{eq:sol_heat} for an initial condition \(\phi\) defined as a Dirac   \(\delta_O\) at the origin becomes \(u(x, t) = g(x, t)\) for \((x, t) \in I_j \times \mathbb{R}^+\), \(j = 1, 2\), achieving equality in \eqref{eq:liyau_est}.

In general, the relationship between the additional term \(\mI_i\), the initial condition \(\phi\), and the coefficients \(\alpha_j\) remains unclear. To explore this further, we compute two explicit examples:

\emph{ Example 1:} Consider a metric graph with three edges \(e_j\), where \(\alpha_j = 1/3\), and an initial condition \(\phi\) defined as \(\phi_j(y) = \delta_1(y)\) for \(j = 1, 2, 3\). Using \eqref{eq:sol_heat}, we find  
\[
P_t\phi(x) = g(x + 1, t) + g(x - 1, t) \qquad (x, t) \in I_j \times \mathbb{R}^+, \, j = 1, 2, 3.
\]  
For \(u(x, t) = P_t\phi(x)\), a straightforward computation gives:  
\[
\partial_{xx} \ln(u(x, t)) = -\frac{1}{2t} + \frac{1}{t^2} \frac{e^{-\frac{x}{t}}}{\left(1 + e^{-\frac{x}{t}}\right)^2}.
\]  
In this case, the estimate \eqref{eq:liyau_est} is not optimal. However, the classical Li-Yau estimate is recovered in the limit as \(x \to +\infty\).

\emph{Example 2:} Consider the  metric graph and the coefficients \(\alpha_j\) as in Example 1, but with an initial condition \(\phi\) such that \(\phi_j(y) = 0\) for \(j = 1, 2\), and \(\phi_3(y) = \delta_1(y)\). Here, the solution \(u(x, t) = P_t\phi(x)\) becomes  
\[
u(x, t) =
\begin{cases}
	\frac{2}{3} g(x + 1, t) & (x, t) \in I_j \times \mathbb{R}^+, \, j = 1, 2, \\
	g(x - 1, t) - \frac{1}{3} g(x + 1, t) & (x, t) \in I_3 \times \mathbb{R}^+.
\end{cases}
\]  
The second derivative of the logarithm of $u$ is computed as:  
\[
\partial_{xx} \ln(u(x, t)) =
\begin{cases}
	-\frac{1}{2t} & (x, t) \in I_j \times \mathbb{R}^+, \, j = 1, 2, \\[2pt]
	-\frac{1}{2t} - \frac{1}{t^2} \frac{\frac{1}{3}e^{-\frac{x}{t}}}{\left(1 - \frac{1}{3} e^{-\frac{x}{t}}\right)^2} & (x, t) \in I_3 \times \mathbb{R}^+.
\end{cases}
\]  
While the classical Li-Yau estimate holds with equality on \(e_1\) and \(e_2\), an additional negative term appears on \(e_3\).

These computations highlight that the interpretation of the additional term \(\mI_i\) remains an open problem, which merits further investigation.
\end{rem}

\section{Applications}
In this section we discuss some consequence of the estimate  proved in the previous section. As a classical application of the Li-Yau inequality \eqref{eq:liyau_est}, we prove
a Harnack estimate for the  solution of the heat equation.
\begin{prop}
	Let $u=P_t\phi$ with $\phi>0$ on $\mG$. Given    $R>0$ and $\epsilon>0$, we have for $x,y\in B_R$, $s,t\in (\epsilon,+\infty)$, $s<t$,
\begin{equation}\label{eq:harnack_est}
\begin{split}
	\frac{u(x,t)}{u(y,s)}\ge  \left(\frac{s}{t}\right)^\half\exp\Big(-(\frac{1}{4} + C)\frac{d(x,y)^2}{t-s}- C\frac{t-s}{st}  \rho(x,y,t,s) ^2 \\
	-2 C\frac{d(x,y)}{t-s}\rho(x,y,t,s)\ln\left(\frac{s}{t}\right)\Big),
\end{split}	
\end{equation}
where $C$ depends on $R$, $\epsilon$ and $\phi$, and
\[
\rho(x,y,t,s)=\begin{cases}
x-\frac{d(x,y)}{t-s}t&x\in I_i, y\in I_j,\, j\neq i\\
x+\frac{d(x,y)}{t-s}t &x,\,y\in I_i.
\end{cases}
\]
\end{prop}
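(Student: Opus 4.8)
The plan is to integrate the Li--Yau inequality \eqref{eq:liyau_est} along a suitable space--time path joining $(y,s)$ to $(x,t)$, following the classical Li--Yau/Harnack argument but keeping careful track of the extra term $(1-2\alpha_i)\mI_i$. First I would rewrite \eqref{eq:liyau_est} in the form
\[
\pd_t v - |\pd_x v|^2 \ge -\frac{1}{2t} - (1-2\alpha_i)\mI_i(x,t),
\]
where $v=\ln u$ (this is exactly \eqref{eq:ly5} read backwards), and then use the pointwise bound \eqref{eq:stima_I_bis} on $\mI_i$. Since $x,y\in B_R$ and $s,t\ge\eps$, and $\phi>0$ is fixed, the quantity $(4\pi t)^{-1/2}\|\phi_i\|_{L^1}/(2\alpha_i P_t\phi(x))$ is bounded above by a constant depending only on $R$, $\eps$ and $\phi$ (using that $P_t\phi(x)$ is bounded below on the compact set $\overline{B_R}\times[\eps,\infty)$ — here one uses $\phi>0$ and continuity/positivity of the kernel; some care is needed for $t\to\infty$, but the factor $x^2/t^2 \le R^2/t^2$ decays, so one gets $|(1-2\alpha_i)\mI_i(x,t)| \le C/t^2$ for a constant $C=C(R,\eps,\phi)$).

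Next I would choose the interpolating path. For fixed endpoints, let $\xi(\tau)$, $\tau\in[s,t]$, be the straight-line-in-the-metric path from $y$ (at time $s$) to $x$ (at time $t$), parametrized so that $\mathrm{dist}$ travelled is linear in $\tau$; concretely $\mathrm{dist}(\xi(\tau),x) = \frac{d(x,y)}{t-s}(t-\tau)$, and on the relevant edge the spatial coordinate is an affine function of $\tau$ whose value one recognizes as $\rho(x,y,t,s)$ evaluated with $t$ replaced by $\tau$ — this is precisely why $\rho$ appears in the statement. Then
\[
v(x,t) - v(y,s) = \int_s^t \frac{d}{d\tau} v(\xi(\tau),\tau)\, d\tau = \int_s^t \big(\pd_t v + \dot\xi\, \pd_x v\big)\, d\tau.
\]
Using $\dot\xi\,\pd_x v \ge -|\dot\xi|\,|\pd_x v| \ge -|\pd_x v|^2 - \tfrac14|\dot\xi|^2$ (Young) together with the differential inequality gives
\[
v(x,t)-v(y,s) \ge \int_s^t \Big(-\frac{1}{2\tau} - \frac{C}{\tau^2} - \frac14|\dot\xi(\tau)|^2\Big) d\tau.
\]
The term $-\int_s^t \tfrac{1}{2\tau}d\tau = -\tfrac12\ln(t/s) = \tfrac12\ln(s/t)$ produces the prefactor $(s/t)^{1/2}$. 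The obstacle — and the only genuinely new point compared with the Euclidean case — is handling $\int_s^t \tfrac{C}{\tau^2}|\dot\xi|\cdots$, or rather handling the $\mI_i$ term \emph{without} first crudely bounding $x^2/t^2$: to get the stated dependence on $\rho$ one should instead keep $\mI_i(x,t) \le C\,\rho^2/t^4 \cdot(\text{stuff})$ along the path, since the spatial coordinate at time $\tau$ is $\le \rho(x,y,t,s)$ up to lower-order terms, giving a contribution $\int_s^t C\rho^2/\tau^2\,d\tau \sim C\rho^2(t-s)/(st)$, which is the second term in \eqref{eq:harnack_est}.

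Finally I would collect the three integrals: $\int_s^t \tfrac14|\dot\xi|^2 d\tau = \tfrac14 \tfrac{d(x,y)^2}{(t-s)^2}(t-s) = \tfrac14\tfrac{d(x,y)^2}{t-s}$ gives the $\tfrac14 d(x,y)^2/(t-s)$ term; the $\mI_i$-contribution splits (after expanding the affine dependence of the spatial coordinate on $\tau$, i.e. writing it as $\rho$ plus a term linear in $(\tau - t)\,d(x,y)/(t-s)$ and squaring) into a piece $\sim C\,d(x,y)^2/(t-s)$ absorbed into the constant $C$ in front of $d(x,y)^2/(t-s)$, a piece $\sim C\rho^2(t-s)/(st)$, and a cross term $\sim C\,\tfrac{d(x,y)}{t-s}\rho\,\ln(s/t)$ coming from $\int_s^t \tfrac{(\tau-t)}{\tau^2}d\tau$; exponentiating $v(x,t)-v(y,s)$ then yields \eqref{eq:harnack_est}. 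The main obstacle is thus purely bookkeeping: tracking which power of the spatial coordinate survives in $\mI_i$ along the path and organizing the resulting elementary integrals $\int_s^t \tau^{-1}\,d\tau$, $\int_s^t \tau^{-2}\,d\tau$, $\int_s^t (\tau-t)\tau^{-2}\,d\tau$ into the exact form stated; no maximum principle or PDE argument is needed, only \eqref{eq:liyau_est}, \eqref{eq:stima_I_bis}, and the lower bound on $P_t\phi$ on compacta.
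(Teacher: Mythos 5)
Your proposal follows essentially the same route as the paper: integrate the Li--Yau inequality along a (piecewise) linear space--time path from $(y,s)$ to $(x,t)$, apply Young's inequality, bound the coefficient of the extra term $\mI_i$ by a constant $C=C(R,\epsilon,\phi)$, and evaluate the elementary integrals in $\tau^{-1}$, $\tau^{-2}$, $(\tau-t)\tau^{-2}$ after expanding the affine spatial coordinate, which produces exactly the three terms involving $d(x,y)$ and $\rho$. The only point to tighten is your justification of the lower bound on $P_t\phi$: it is \emph{not} bounded below on $\overline{B_R}\times[\epsilon,\infty)$ (it decays like $t^{-1/2}$ since $\phi\in L^1$), but, as in the paper, one shows $P_t\phi(x)\ge c\,(4\pi t)^{-1/2}$ for $x\in B_R$, $t\ge\epsilon$, with $c$ depending on $R$, $\epsilon$, $\phi$, and this factor cancels the $(4\pi t)^{-1/2}$ in \eqref{eq:stima_I_bis}, giving the uniform bound on the coefficient of the $\mI_i$-term that your argument needs.
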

\begin{proof}
We first consider the case $x,y\in I_i$, $i\in\{1,\dots, N\}$. Set
	\[w(r)=v(x+r\frac{y-x}{t-s},t-r)\qquad r\in [0,t-s]\]
where $v(x,t)=\ln(u(x,t))$. Observe that $w(0)=v(x,t)$, $w(r)=v(y,s)$ and
\begin{align*}
	w'(r) &=\pd_x v \Big(x+r\frac{y-x}{t-s},t-r\Big)\frac{y-x}{t-s}-\pd_t v \Big(x+r\frac{y-x}{t-s},t-r\Big) \\
	&\le |\pd_xv|^2 +\frac{1}{4}\left|\frac{y-x}{t-s}\right|^2 -\pd_t v \Big(x+r\frac{y-x}{t-s},t-r\Big).
\end{align*}
Hence, by \eqref{eq:liyau_est} and \eqref{eq:stima_I}, we get
\begin{equation}\label{stima_deriv}
	w'(r)\le \frac{1}{2(t-r)}+\frac{1}{4}\left|\frac{y-x}{t-s}\right|^2+ C
\frac{(x+r (y-x)/(t-s))^2}{(t-r)^2}
\end{equation}
where 
\begin{equation}\label{constant}
	C=\max_{i=1,\dots, N}\frac{\int_0^\infty  g(x+r\frac{y-x}{t-s}-z,t-r) \phi_i(z)dy}{2\alpha_i P_{t-r}\phi(x)}.
\end{equation}
We prove that the constant $C$ is bounded. We have
\begin{equation}\label{parte_1}
	\int_0^\infty  g(x+r\frac{y-x}{t-s}-z,t-r) \phi_i(z)dz\le (4\pi (t-r))^{-\half}  \|\phi_i\|_{L^1(\R^+)}.
\end{equation}  
Moreover, if $\phi$ is not identically null, there exists $\bar j\in \{1,\dots, N\}$, $\eta>0$, $R_0>0$ and a measurable set $K\subset B_{R_0}$ with positive Lebesgue measure $\lambda$ such that $\phi_{\bar j}(x)\ge \eta$ for $x\in K$. Hence for $x,y\in B_R$, since $g(z_1-z_2,t)-g(z_1+z_2,t)\ge 0$ for $z_1+z_2\ge 0$, we have
\begin{equation}\label{parte_2}
	\begin{split}
	P_{t-r}\phi(x)&\ge  \sum_{j=1}^N 2\alpha_j\int_0^\infty g(x+r\frac{y-x}{t-s}+z,t-r)\phi_j(z)dz\\
	&\ge 2\alpha_{\bar j}\int_0^\infty g(x+r\frac{y-x}{t-s}+z,t-r)\phi_{\bar j}(z)dz\\
	&\ge 2\alpha_{\bar j} \eta \int_K g(x+r\frac{y-x}{t-s}+z,t-r)dz\\
	&\ge 2\alpha_{\bar j} (4\pi (t-r))^{-\half} \eta\lambda e^{-\frac{|R_0+R|^2}{4\epsilon}}.
	\end{split}
\end{equation}
Plugging \eqref{parte_1} and \eqref{parte_2} in \eqref{constant}, we get a bound on $C$ 
which depends on $R$, $\epsilon$ and $\phi$.\\
By \eqref{stima_deriv}, we have 
\begin{align*}
	w(r)-w(0) &\le \int_0^{t-s} \left[\frac{1}{2(t-r)}+\frac{1}{4}\left|\frac{y-x}{t-s}\right|^2+ C
	\frac{(x+r (y-x)/(t-s))^2}{(t-r)^2}\right]dr\\
	&=\half\ln\left(\frac{t}{s}\right)+(\frac{1}{4} + C)\frac{|y-x|^2}{t-s} +
	 C\Big (x+\frac{y-x}{t-s}t\Big)^2 \frac{t-s}{st}\\
	&+2 C\frac{ y-x }{t-s}\Big (x+\frac{y-x}{t-s}t\Big)\ln\left(\frac{s}{t}\right)
\end{align*}
and, recalling the definition of $w$ and $v$, we get
\begin{equation}\label{eq:hk1}
	\begin{split}
	\frac{u(x,t)}{u(y,s)}\ge  \left(\frac{s}{t}\right)^\half\exp\Big(-(\frac{1}{4} + C)\frac{|y-x|^2}{t-s}- C\frac{t-s}{st}\Big (x+\frac{y-x}{t-s}t\Big)^2 \\
	-2 C\frac{|y-x|}{t-s}\Big (x+\frac{y-x}{t-s}t\Big)\ln\left(\frac{s}{t}\right)\Big).
\end{split}
\end{equation}
Now we consider the case $x\in I_i$, $y\in I_j$, $i\neq j$. Recall that $\mathrm{dist}(x,y)=x+y$. Define
\[
\gamma(r)=\begin{cases}
	x-\frac{x+y}{t-s}r\quad &r\in [0, \frac{x}{x+y}(t-s)]\\[4pt]
	 \frac{x+y}{t-s}r-x &r\in [ \frac{x}{x+y}(t-s),t-s]
\end{cases}
\]
and $w(r)=v(\gamma(r),t-r)$. Then $w(0)=v(x,t)$, $w(t-s)=v(y,s)$ and 
 
\begin{align*}
	w'(r)&=\pd_x v(\gamma(r),t-r)\dot \gamma(r)-\pd_t v(\gamma(r),t-r)\\
	&=
\begin{cases}
	-\pd_x v(\gamma(r),t-r)\frac{x+y}{t-s} -\pd_t v(\gamma(r),t-r) \quad &r\in [0, \frac{x}{x+y}(t-s)],\\[4pt]
	\pd_x v(\gamma(r),t-r)\frac{x+y}{t-s} -\pd_t v(\gamma(r),t-r)   &r\in [ \frac{x}{x+y}(t-s),t-s].
\end{cases}   
\end{align*}
Hence,  by \eqref{eq:liyau_est} and \eqref{eq:stima_I}, we get
\[w'(r)\le \frac{1}{2(t-r)}+\frac{1}{4}\left|\frac{x+y}{t-s}\right|^2+ C
\frac{(\gamma(r))^2}{(t-r)^2}\qquad r\in [0,t-s],\]
with $(\gamma(r))^2=(x- r(x+y)/(t-s))^2$. Arguing as in the previous case we get
\begin{equation}\label{eq:hk2}
	\begin{split}
		\frac{u(x,t)}{u(y,s)}\ge  \left(\frac{s}{t}\right)^\half\exp\Big(-(\frac{1}{4} + C)\frac{(x+y)^2}{t-s}- C\frac{t-s}{st} \Big (x-\frac{x+y}{t-s}t\Big)^2 \\
		-2 C\frac{x+y}{t-s}\Big (x-\frac{x+y}{t-s}t\Big)\ln\left(\frac{s}{t}\right)\Big).
	\end{split}
\end{equation}
Combining \eqref{eq:hk1} with \eqref{eq:hk2}, the desired result \eqref{eq:harnack_est} follows.
\end{proof}
 We say that a function $w$ is harmonic on $\mG$  if it satisfies
\begin{equation}\label{eq:laplace}
	\left\{
	\begin{array}{ll}
		 \pd_{xx}w_i=0\quad&x\in I_i ,\, i\in \{1,\dots,N\},\\[4pt]
		w_i(0 )=w_j(0 )&  i, j\in \{1,\dots,N\},\\[4pt]
		\sum_{j=1}^N\alpha_j w_j(0 )=0.&  
	\end{array}
	\right.
\end{equation} 
By means of  inequality \eqref{eq:liyau_est}, we   prove  a classical Liouville property for harmonic functions on metric star graph.
\begin{prop}
	Any bounded  harmonic function $w$ on $\mG$ is constant.
\end{prop}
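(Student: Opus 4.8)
The plan is to reproduce the classical manifold proof of the Liouville property, using \eqref{eq:liyau_est} in place of the sharp Li-Yau inequality and checking that the extra term $\mI_i$ does no harm.

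First I would normalise. If $|w|\le M$ on $\mG$, set $u:=w+2M$, so that $M\le u\le 3M$ and $w$ is constant if and only if $u$ is. Since $w$ is harmonic in the sense of \eqref{eq:laplace}, so is $u$, and the time-independent function $(x,t)\mapsto u(x)$ is a bounded, strictly positive solution of the heat equation \eqref{eq:heat}: on each edge $\pd_t u-\pd_{xx}u=-\pd_{xx}u=0$, and the continuity and Kirchhoff conditions at $O$ in \eqref{eq:heat} are exactly those in \eqref{eq:laplace}. By well-posedness of \eqref{eq:heat} (\cite{kps,mu}) the bounded solution with datum $u$ is unique, hence $u(x)=P_tu(x)$ for all $t>0$. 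Thus $u$ is a solution to which the computations in the proof of Theorem \ref{th:liyau} apply: only the boundedness of the datum $\phi=u$ is used there (the Gaussian decay of $g$ guaranteeing convergence of all the integrals involved), and in place of \eqref{eq:stima_I_bis} — which needs $\phi\in L^1$ — one uses \eqref{eq:stima_I}.

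Next I would specialise \eqref{eq:ly5}. Writing $v(x,t):=\ln u(x)$, which is independent of $t$, we have $\pd_t v\equiv 0$, so \eqref{eq:ly5} gives
\[
\left|\frac{\pd_x u(x)}{u(x)}\right|^2=|\pd_x v(x,t)|^2\le \frac{1}{2t}+(1-2\alpha_i)\,\mI_i(x,t),\qquad x\in I_i,\ t>0.
\]
Since $0\le\phi_i=u_i\le 3M$ and $\int_0^\infty g(x-y,t)\,dy\le 1$, we get $\int_0^\infty g(x-y,t)\phi_i(y)\,dy\le 3M$, while $P_t\phi(x)=u(x)\ge M$; hence \eqref{eq:stima_I} yields $\mI_i(x,t)\le \frac{3}{2\alpha_i}\frac{x^2}{t^2}$. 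As $\mI_i\ge 0$, in all cases $(1-2\alpha_i)\mI_i(x,t)\le \frac{3|1-2\alpha_i|}{2\alpha_i}\frac{x^2}{t^2}$, so
\[
\left|\frac{\pd_x u(x)}{u(x)}\right|^2\le \frac{1}{2t}+\frac{3|1-2\alpha_i|}{2\alpha_i}\,\frac{x^2}{t^2},\qquad x\in I_i,\ t>0.
\]
For $x$ fixed the right-hand side tends to $0$ as $t\to\infty$, while the left-hand side does not depend on $t$; therefore $\pd_x u\equiv 0$ on each edge. Hence each $u_i$ is constant, and the continuity condition at $O$ forces these constants to agree, so $u$ — and therefore $w=u-2M$ — is constant on $\mG$.

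I expect the only real point to check (and the mild obstacle) to be the passage from Theorem \ref{th:liyau}, stated for $L^1$ initial data, to the merely bounded solution $u$: one must justify that $u=P_tu$ by uniqueness of bounded solutions of \eqref{eq:heat}, and that $\mI_i$ stays controlled — which it does, via \eqref{eq:stima_I}, precisely because $u$ is bounded below by a positive constant. Everything else is the routine manifold argument: reduce to a positive function, apply the gradient estimate, and let the time parameter tend to infinity. (It is worth noting that the conclusion is in any case elementary, since $\pd_{xx}w_i=0$ forces $w_i$ to be affine and an affine function bounded on a half-line is constant; the value of the above is that it realises the Liouville property as a genuine consequence of \eqref{eq:liyau_est}.)
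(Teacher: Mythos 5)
Your proof is correct and follows essentially the same route as the paper's: shift $w$ to a positive function, view it as a time-independent bounded solution represented by $P_t$, use the Li-Yau computation with $\pd_t v=0$, and let $t\to\infty$ after bounding $\mI_i$ via \eqref{eq:stima_I}. Your write-up merely makes explicit what the paper leaves implicit (the quantitative bound $\mI_i(x,t)\le \tfrac{3}{2\alpha_i}x^2/t^2$ coming from $u\ge M$, $u\le 3M$, and the sign of the $\mI_i$-term in the gradient inequality), so no substantive difference in approach.
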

\begin{proof}
	Let $w$ be a solution of \eqref{eq:laplace}. Since $ w$ is bounded, we can add a constant to ensure $ w$ is strictly positive. Now, consider the heat equation \eqref{eq:heat} with the initial condition $ w$. Given that $ w$ is bounded, the problem admits the unique solution $u(x, t)=w(x)$, represented by \eqref{eq:sol_heat}.  
	By \eqref{eq:liyau_est}, since $\pd_t (\ln u )=0$, we have
	\[
 |\pd_x \ln(u(x,t))|^2 \le \frac{1}{2t}-(1-2\alpha_i)\mI_i(x,t).
	\]
Recalling \eqref{eq:stima_I} and passing to the limit for $t\to\infty$ in the previous
inequality, we get $\pd_x \ln(w(x))=0$ and therefore $w$ constant on $\mG$.
\end{proof}
\section*{Acknowledgments} The author expresses gratitude to Alessandro Goffi for  insightful discussions on the subject of this work. 



\begin{thebibliography}{99}
	
\bibitem{bl}
Bakry, D.; Ledoux, M.
A logarithmic Sobolev form of the Li-Yau parabolic inequality. 
Rev. Mat. Iberoam. 22 (2006), no. 2, 683–702.

	
\bibitem{bhllmy}
	 Bauer, F.; Horn, P. ;  Lin, Y.;  Lippner, G.; Mangoubi,  D.;  Yau, S.T.
	Li-Yau inequality on graphs. J. Differential Geom. 99 (2015), no. 3, 359-405.

\bibitem{cc}
	Camilli, F.; Corrias, L.
	Parabolic models for chemotaxis on weighted networks. J. Math. Pures Appl.  108 (2017), no.4, 459-480.

\bibitem{c}
	Cattaneo, C. The spread of the potential on a weighted graph. Rend. Sem. Mat. Univ. Pol. Torino  56 (1999), no.4, 221–230.
	
\bibitem{cm}
	Colding, Tobias H.; Minicozzi II, William P.
	Liouville properties. ICCM Not.7(2019), no.1, 16–26.
	
\bibitem{fs}
	 Freidlin, M.; Sheu,  S.-J.. Diffusion processes on graphs: stochastic differential equations, large deviation principle. Probab. Theory Related Fields 116 (2000), no. 2, 181-220.

\bibitem{glly}	 
Gong, C.; Lin, Y.; Liu, S.; Yau, S.-T.
Li-Yau inequality for unbounded Laplacian on graphs. Adv. Math. 357 (2019), 106822, 23 pp.

\bibitem{ham}	
 	Hamilton, R.S. The Harnack estimate for the Ricci flow. J. Differential Geom. 37 (1993), no. 1, 	225-243.
 	
\bibitem{kps}
	Kostrykin, V.; Potthoff, J.; Schrader, R.
	Heat kernels on metric graphs and a trace formula. Adventures in mathematical physics, 175–198. Contemp. Math., 447. American Mathematical Society, Providence, RI, 2007
	
\bibitem{ly}
	Li, P.; Yau, S.-T.
	On the parabolic kernel of the Schr\"odinger operator.
	Acta Math. 156 (1986), no. 3-4, 153–201.
	
\bibitem{mu}	
	Mugnolo, D. Gaussian estimates for a heat equation on a network. 
	Netw. Heterog. Media 2 (2007), no. 1, 55-79.

\bibitem{o}
	Okada, T. Asymptotic behavior of skew conditional heat kernels on graph networks. 
	Canad. J. Math. 45 (1993), no. 4, 863–878.
	
\bibitem{q}
Qian, B.
Remarks on Li-Yau inequality on graphs. J. Math. Anal. Appl. 456 (2017), no.2, 882-902.	

\bibitem{sz}
	Souplet, P.; Zhang, Q.
	Sharp gradient estimate and Yau's Liouville theorem for the heat equation on noncompact manifolds. Bull. London Math. Soc. 38 (2006), no.6, 1045-1053.

\end{thebibliography}
\end{document}